\documentclass{article}
\usepackage{amsmath, amsthm, amssymb, amsfonts, cite}
\usepackage[center]{titlesec}
\usepackage{lipsum}
\newtheorem{Theorem}
{Theorem}

\newtheorem{Lemma}
{Lemma}
\theoremstyle{remark}

\theoremstyle{Proposition}
\newtheorem*{Proposition}
{Proposition}
\theoremstyle{definition}
\newtheorem{definition}{Definition}[section]

\newcommand{\W}{\mathcal{W}}
\newcommand{\V}{\mathcal{V}}

\newcommand{\N}{\mathbb{N}}

\newcommand{\U}{\mathcal{U}}
\newcommand{\mP}{\mathcal{P}}

\title{Differentiating Infinite Voting Populations using Ultrafilters}
\author{Priyanka Menon\thanks{I am very grateful for the advice and mentorship of Dr. Barry Mazur, Dr. Amartya Sen, Dr. Nate Ackerman, and Dr. Scott Kominers throughout the process of preparing this article.}
 \\Harvard University, Cambridge, MA}
 \date{}

\begin{document}
\maketitle

\mathchardef\mhyphen="2D

\section{Introduction}

Ultrafilters arise frequently in the social choice literature and surrounding fields as collections of the decisive set for a given aggregation procedure. We know from \cite{Kirman} that we can associate each Arrow social welfare function with an ultrafilter. Because of their structure, ultrafilters are useful in analyses of both finite and infinite voting populations.

For an infinite set of voters, \cite{Fishburn} demonstrated that given a society with an infinite population, the non-dictatorship condition of Arrow's general possibility theorem is satisfied, in that there is no single voter whose preferences dictate the outcome of the election. Similar results regarding non-dictatorship in infinite populations have been proven in the fields of judgment aggregation as well \cite{Mongin}. 

Less attention has been paid to the study of the comparison of societies with infinite populations. One of the first efforts to do so was made by \cite{Skala}, utilizing the Rudin-Keisler order over ultrafilters. After this, however, little further work has been undertaken.

In this paper, we focus on the study of the relationship between societies with countably infinite voters. Thus, for a given population, the cardinality of the population of voters in our society is equal to the cardinality of the natural numbers: $|V| = |\N|$.

In this process of comparison, a natural question to ask would be if all sets of decisive voters (ultrafilters) over $\N$ are isomorphic, and thus fundamentally similar. To answer this, we present the following theorem due to \cite{Pospisil}: 
\begin{Theorem}
There are exactly $2^{(2^{\aleph_0})}$ non-principal ultrafilters on $\N$.
\end{Theorem}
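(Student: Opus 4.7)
The plan is to prove matching upper and lower bounds. The upper bound is immediate counting: any ultrafilter $\U$ on $\N$ is a subset of $\mP(\N)$, hence an element of $\mP(\mP(\N))$, so there are at most $2^{2^{\aleph_0}}$ ultrafilters altogether; since principal ultrafilters on $\N$ are in bijection with $\N$ and hence number only $\aleph_0$, removing them leaves at most $2^{2^{\aleph_0}}$ non-principal ones. The real content is the matching lower bound, and the hard part will be producing enough mutually distinct non-principal ultrafilters.

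My strategy is the classical route through independent families. Call $\mathcal{I} \subseteq \mP(\N)$ \emph{independent} if for every pair of finite disjoint subfamilies $\{A_1,\dots,A_n\}$ and $\{B_1,\dots,B_m\}$ of $\mathcal{I}$, the intersection $A_1 \cap \cdots \cap A_n \cap (\N \setminus B_1) \cap \cdots \cap (\N \setminus B_m)$ is infinite. Assuming such an $\mathcal{I}$ of cardinality $2^{\aleph_0}$ exists, each function $f : \mathcal{I} \to \{0,1\}$ determines a family
\[
\F_f \;=\; \{\,A \in \mathcal{I} : f(A)=1\,\} \,\cup\, \{\,\N \setminus A : A \in \mathcal{I},\ f(A)=0\,\},
\]
which by independence has the finite intersection property, and so extends via Zorn's lemma to an ultrafilter $\U_f$ on $\N$. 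If $f \neq g$, choosing $A \in \mathcal{I}$ with $f(A) \neq g(A)$ places $A$ in one of $\U_f,\U_g$ and $\N \setminus A$ in the other, so the map $f \mapsto \U_f$ is injective; moreover each $\U_f$ must be non-principal, since every singleton $\{n\}$ fails to meet the infinite set $A_1 \cap \cdots \cap A_n \cap (\N \setminus B_1) \cap \cdots$ for a suitable (large) choice of witnesses. This produces $2^{2^{\aleph_0}}$ distinct non-principal ultrafilters, matching the upper bound.

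The main obstacle is therefore the existence of an independent family of size continuum on a countable set, due to Hausdorff. I would construct it on the countable set $X$ of pairs $(F,\mathcal{G})$ where $F \subseteq \N$ is finite and $\mathcal{G}$ is a finite collection of subsets of $F$, then transport to $\N$ via a bijection. For each $A \subseteq \N$, set
\[
X_A \;=\; \{\, (F,\mathcal{G}) \in X : F \cap A \in \mathcal{G} \,\}.
\]
There are $2^{\aleph_0}$ sets of the form $X_A$, since distinct $A$ and $A'$ can be separated by taking $F$ to contain a point in their symmetric difference and choosing $\mathcal{G}$ accordingly. To verify independence, given distinct $A_1,\dots,A_n,B_1,\dots,B_m$ I would pick, for each of the $2^{n+m}$ Boolean patterns on these sets, a single point of $\N$ realizing that pattern, collect these points into a finite $F$, and then choose $\mathcal{G} = \{F \cap A_1, \dots, F \cap A_n\}$; this $(F,\mathcal{G})$ lies in every $X_{A_i}$ and in no $X_{B_j}$, and infinitely many such $(F,\mathcal{G})$ exist by enlarging $F$ arbitrarily. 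Combining this construction with the ultrafilter extension argument above yields the theorem.
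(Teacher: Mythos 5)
The paper gives no proof of this statement at all---it is quoted as a known result of Posp\'i\v{s}il---so your write-up is supplying an argument rather than paralleling one. The route you choose, matching an easy upper bound against a lower bound built from an independent family of size $2^{\aleph_0}$ (the Fichtenholz--Kantorovich--Hausdorff construction on the countable set of pairs $(F,\mathcal{G})$), is the standard proof, and its architecture is correct: the counting upper bound, the construction of the sets $X_A$, the finite intersection property of $\F_f$, and the injectivity of $f \mapsto \U_f$ all go through. One small imprecision in the independence check: you propose to put into $F$ a point realizing each of the $2^{n+m}$ Boolean patterns, but some patterns may be empty (e.g.\ if $A_1 \subseteq A_2$). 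What you actually need is only a point of the symmetric difference $A_i \,\triangle\, B_j$ for each pair $(i,j)$; these are nonempty because the sets are distinct, and they force $F \cap B_j \neq F \cap A_i$, which is all that the choice $\mathcal{G} = \{F \cap A_1, \dots, F \cap A_n\}$ requires.

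The one substantive gap is the claim that ``each $\U_f$ must be non-principal.'' The justification you give---that every singleton misses some finite intersection of members of $\F_f$---fails for certain $f$. Fix $n \in \N$ and let $f(A) = 1$ exactly when $n \in A$; then every member of $\F_f$ contains $n$, so every finite intersection of members of $\F_f$ contains $n$, no choice of witnesses works, and indeed $\F_f \cup \{\{n\}\}$ has the finite intersection property, so Zorn's lemma is free to hand you the principal ultrafilter at $n$. Two standard repairs: adjoin the Fr\'echet (cofinite) filter to $\F_f$ before extending, which preserves the finite intersection property precisely because independence makes all the relevant intersections infinite; or note that since $f \mapsto \U_f$ is injective and there are only $\aleph_0$ principal ultrafilters, at most countably many of the $2^{2^{\aleph_0}}$ ultrafilters you produce can be principal, leaving $2^{2^{\aleph_0}}$ non-principal ones in any case. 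With either fix the proof is complete.
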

\noindent Given this, we see that there are too few permutations for all nonprincipal ultrafilters to be isomorphic, and thus we can begin in earnest the process of comparison.

To this end, in this short paper, we introduce to the social choice literature the notion of the Rudin-Frolik ordering, which allows for the ordering of societies based on the structure of the set of decisive coalitions and is stronger than the Rudin-Keisler ordering used in \cite{Skala}. We then prove a theorem regarding the invisible dictators of \cite{Kirman} using the Rudin-Frolik ordering.

\section{Framework and Definitions}

First, we give the canonical definition of an ultrafilter as it arises in the social choice literature:
\begin{definition}
Take $X$ as a set and $\mP(X)$ as the power set of $X$. We shall call a family of sets, $\mathcal{U} \subseteq \mP(X)$, an \textit{ultrafilter} on $X$ if:
\begin{itemize}
\item $\emptyset \not\in \mathcal{U}$ 
\item $X \in \mathcal{U}$ 
\item If $A \cup B \in \U$, then either $A \in \U$ or $B \in \U$.
\item For $A, B \in \mathcal{U}$, $A \cap B \in \mathcal{U}$
\item If $A \in \mathcal{U}$ and $A \subset B$, then $B \in \mathcal{U}$
\item For any $A \in X$, either $A \in \U$ or $X-A \in \U$.
\end{itemize}
\end{definition}
Closely related to the notion of an ultrafilter is that of a limit along an ultrafilter, which gives a generalized method for taking the limit of sequences in compact spaces:
\begin{definition} 
Take $\U$ as an ultrafilter over $\N$ and take $f: \N \to X$ to be a mapping giving a sequence of elements in a compact space $X$. Then, we can define a limit of $f(n)$ along $\U$ as the unique point, $a$, such that:
$$a = \lim_{n \to \U} f(n) \in X$$
given that if $U$ is an open neighborhood containing $a$, then $f^{-1}(U) \in \U$. Thus, we can think of the sequence as having been indexed by the natural numbers such that the sets of indices $f^{-1}(\U) = \{n \in \N: f(n) \in U\}$ of the elements of $f(n)$ that are mapped into $U$ must be contained in $\U$.
\end{definition}
\noindent We refer readers to \cite{Zirnstein} for a proof of the above mentioned limit's existence and uniqueness. 

Using the generalized limit given to us in the previous definition, we can construct a compactification of the natural numbers. We will use this space, called the Stone-Cech compactification and written as $\beta \N$, as the topological grounding for our investigation of invisible dictatorships.

\begin{definition}
The Stone-Cech compactification of $\N$ is a topological space that has the following properties:
\begin{enumerate}
\item $\N \subset \beta \N$
\item $\beta \N$ is a compact Hausdorff space.
\item $\N$ is dense in $\beta \N$.
\item For any map $f: \N \to Y$, where $Y$ is a compact space, there exists a unique continuous extension of $f$ which maps $\beta \N$ to $Y$, written as $\overline{f}: \beta \N \to Y$.
\end{enumerate}
\end{definition}
We can view $\beta \N$ as the set of all ultrafilters on $\N$, with the principal ultrafilters corresponding to the elements of $\N$. To see the proof of this equivalence, we refer the reader to \cite{Walker}.

\cite{Kirman} situate their investigation and proof of existence of invisible dictatorship in the Stone-Cech compactification of $\N$. Importantly, this means that condition 3 of Definition 2.3 implies that for every situation, $f$, of possible combinations of individual preferences, there exists a continuous extension of $f$ which maps the space of compactified voters to the space of all weak orders, $\mathfrak{P}$, on the set of alternatives, $X$. For this to be the case, however, we must restrict $X$ and $\mathfrak{P}$ to the finite case and then endow $\mathfrak{P}$ with the discrete topology.

Finally, we revise briefly the notation and conditions of Arrow's general possibility theorem, as they appear in \cite{Kirman}:

We write $|X|$ for the number of alternatives, $V$ as the space of voters, $f$ as a function from $V$ to $\mathfrak{P}$ (which again is the set of all weak orders over $X$), $F$ as the set of all $f$, and $\sigma$ (a social welfare function) as a mapping which assigns each $f \in F$ to a weak order $\sigma(f)$ in $\mathfrak{P}$.
\begin{Theorem}
Arrow's general possibility theorem finds that for a finite population, a social welfare function cannot satisfy the following conditions: 
\begin{itemize}
\item $|X| \geq 3$
\item $\sigma$ is a function on $F$ into $\mP$
\item For all $a,b \in X$, $af(V)b \Rightarrow a\sigma(f)b$
\item For all $a,b \in X$ and $f,g \in F$, $f=g$ on $\{a,b\} \Rightarrow \sigma(f) = \sigma(g)$ on $\{a,b\}$
\item There is no $v_0 \in V$ such that , for all $a,b \in X$ and $f \in F$, $af(v_0)b \Rightarrow a\sigma(f)b$
\end{itemize}
\end{Theorem}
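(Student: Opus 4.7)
The plan is to follow the ultrafilter-theoretic proof of Arrow's theorem due to Kirman and Sondermann, which fits naturally into the framework of this paper. I would assume for contradiction that a social welfare function $\sigma$ satisfies conditions 1--4 and derive the existence of a dictator, contradicting condition 5.

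First, I would introduce the collection of \emph{decisive coalitions} $\mathcal{D} \subseteq \mP(V)$, where $S \in \mathcal{D}$ iff for every profile $f \in F$ and every pair $a, b \in X$, the implication $(af(v)b \text{ for all } v \in S) \Rightarrow a\sigma(f)b$ holds. The IIA condition (condition 4) ensures this notion does not depend on which pair $(a,b)$ is used to test decisiveness, so $\mathcal{D}$ is well-defined as a family of subsets of $V$.

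Second, I would show that $\mathcal{D}$ satisfies each of the ultrafilter axioms of Definition 2.1. Membership of $V$ is the Pareto condition (condition 3); non-membership of $\emptyset$ follows by exhibiting a profile with a unanimously reversed preference; and upward closure is immediate. The dichotomy axiom ($A \in \mathcal{D}$ or $V \setminus A \in \mathcal{D}$) and closure under intersection are the substantive cases, and both require condition 1 ($|X| \geq 3$) together with IIA. For intersection closure, given $A, B \in \mathcal{D}$, I would construct an auxiliary profile over three alternatives $a, b, c$ in which the voters in $A$ unanimously rank $a$ above $b$, the voters in $B$ unanimously rank $b$ above $c$, and the remaining preferences are arranged so that chaining the decisiveness of $A$ on $(a,b)$ with that of $B$ on $(b,c)$ through the transitivity of $\sigma(f)$ forces $a\,\sigma(f)\,c$, which must therefore be driven by the voters in $A \cap B$; IIA then upgrades this to full decisiveness of $A \cap B$.

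Third, since $V$ is finite, the ultrafilter $\mathcal{D}$ is principal: a finite induction on intersection closure gives that $\bigcap_{S \in \mathcal{D}} S \in \mathcal{D}$, so this intersection is nonempty, and if it contained two distinct voters one could partition it into two halves neither of which is decisive, contradicting dichotomy. Hence the intersection is a singleton $\{v_0\}$, and this $v_0$ is a dictator, contradicting condition 5. The main obstacle will be the intersection-closure step, which is the combinatorial core of Arrow's theorem and the place where $|X| \geq 3$ is essential; the remaining steps reduce to routine manipulations of the axioms together with the elementary fact that ultrafilters on finite sets are principal.
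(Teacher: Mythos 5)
The paper does not prove this statement at all: Theorem 2 is presented purely as background, a restatement of Arrow's theorem in the notation of Kirman--Sondermann, with the proof left to the cited literature. Your sketch is therefore not competing with an argument in the paper; it is supplying the standard decisive-coalition (ultrafilter) proof of Arrow's theorem, which is exactly the argument underlying the paper's whole framework of associating social welfare functions with ultrafilters. As an outline it is correct: defining $\mathcal{D}$ with the universal quantifier over pairs, verifying the ultrafilter axioms of Definition 2.1 (with the field-expansion and intersection-closure steps carrying the weight of $|X| \geq 3$ plus independence of irrelevant alternatives), and then invoking the fact that an ultrafilter on a finite set is principal to extract the dictator $v_0$ is precisely the Kirman--Sondermann route.

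Two small points of precision. First, your remark that IIA alone ``ensures this notion does not depend on which pair $(a,b)$ is used to test decisiveness'' overstates what IIA gives you: pair-independence of decisiveness is the field-expansion lemma, which needs $|X| \geq 3$ and Pareto as well; since you defined $\mathcal{D}$ by quantifying over all pairs anyway, this does not damage the argument, but the justification should be attributed correctly. Second, in the final step the cleaner phrasing is: if the minimal decisive set $D = \bigcap_{S \in \mathcal{D}} S$ had two elements, split it as $D = D_1 \cup D_2$ with both parts nonempty; dichotomy puts $D_1$ or $V \setminus D_1$ in $\mathcal{D}$, and either case contradicts $D$ being contained in every member of $\mathcal{D}$. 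Neither issue is a gap in the idea, only in the bookkeeping.
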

As stated earlier, we restrict our attention to the case of countably infinite voters, so $|V|=\N$, and thus (when applicable) write $\N$ to indicate $V$.
\section{Main Results}
\cite{Kirman} defines the concept of invisible dictatorship as a social welfare function, $\sigma$, that violates the following condition:  
\begin{Proposition}
There is no $\overline{\U} \in \beta V$ such that, for all $a, b \in X$ and $f \in F$, $$af(\overline{\U})b \Rightarrow a\sigma(f)b$$
where $\beta V$ is the Stone-Cech compactification of the space of voters, $V$.
\end{Proposition}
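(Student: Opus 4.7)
The plan is a proof by contradiction: suppose there exists $\overline{\U} \in \beta V$ such that $af(\overline{\U})b \Rightarrow a\sigma(f)b$ for all $a, b \in X$ and $f \in F$, and split on whether $\overline{\U}$ is principal or non-principal.

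The principal case is immediate. If $\overline{\U}$ corresponds under the embedding $V \subset \beta V$ (Definition 2.3(1), together with the identification of principal ultrafilters with points of $V$) to some voter $v_0 \in V$, then continuity of the extension in Definition 2.3(4) forces $\overline{f}(\overline{\U}) = f(v_0)$, so the hypothesized implication reduces to $af(v_0)b \Rightarrow a\sigma(f)b$. This is precisely the assertion that $v_0$ is a visible dictator, directly contradicting the fifth bullet of Theorem 2.

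For the non-principal case, I would construct an $f \in F$ that breaks the implication. Using the universal property of Definition 2.3(4), extend $f$ to $\overline{f}:\beta V \to \mathfrak{P}$; since $\mathfrak{P}$ carries the discrete topology, continuity forces $\overline{f}$ to be constant on some basic clopen set of the form $\{\V \in \beta V : A \in \V\}$ with $A \in \overline{\U}$. Non-principality of $\overline{\U}$ makes $A$ infinite, so I can partition $A = A_0 \sqcup A_1$ with $A_0 \in \overline{\U}$ and $A_1$ also infinite. Assign one preference order $p_0$ (with $a$ above $b$) on $A_0$ and a conflicting order $p_1$ (with $b$ above $a$) on $V \setminus A_0$. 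Then $\overline{f}(\overline{\U}) = p_0$, so the hypothesis pins down $a\sigma(f)b$. Independence of irrelevant alternatives (condition 4) and the Pareto-style condition (condition 3) should then allow me to exhibit a companion profile $f'$ agreeing with $f$ on $\{a,b\}$ but for which $\sigma(f')$ instead ranks $b$ above $a$, yielding the contradiction.

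The main obstacle is clearly the non-principal case. The difficulty is that the constructed $f$ must genuinely force $\sigma(f)$ to disagree with $\overline{f}(\overline{\U})$ on some pair $(a,b)$, and this is delicate precisely because Kirman's own invisible-dictator theorem shows that, absent further structural restrictions on $\sigma$, the ultrafilter $\overline{\U}$ can be engineered to \emph{align} with $\sigma$ on every such pair. My argument therefore hinges on whatever hypothesis is in force beyond the five conditions of Theorem 2 to rule out the Kirman-type alignment — for instance, a non-triviality condition forcing $\sigma$ to reflect the conflicting preferences across the partition $\{A_0, A_1\}$. Without such a hypothesis, only the principal case survives, and the non-principal case becomes the complementary statement established by Kirman.
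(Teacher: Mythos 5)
There is a fundamental problem here, and you have half-noticed it yourself in your closing paragraph. The Proposition is not a theorem of the paper and the paper offers no proof of it: the text introduces it as the \emph{condition} whose violation defines invisible dictatorship (``\cite{Kirman} defines the concept of invisible dictatorship as a social welfare function, $\sigma$, that violates the following condition''). It plays the same role as the non-dictatorship bullet in Theorem 2 --- a normative requirement one would like $\sigma$ to satisfy --- and the entire point of the Kirman--Sondermann invisible-dictator result is that it \emph{cannot} be satisfied: for any $\sigma$ meeting the first four conditions of Theorem 2 over an infinite $V$, the decisive coalitions form an ultrafilter $\U$ on $V$, and the corresponding point $\overline{\U} \in \beta V$ is exactly an element for which $af(\overline{\U})b \Rightarrow a\sigma(f)b$ holds for all $a,b,f$. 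So the universally quantified statement you are trying to prove is false under the only hypotheses in play, and no construction of a profile $f$ in the non-principal case can succeed.

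Your principal-ultrafilter case is sound as far as it goes (a principal $\overline{\U}$ is a visible dictator, contradicting the fifth bullet of Theorem 2), and your final paragraph correctly identifies that the non-principal case ``becomes the complementary statement established by Kirman'' absent some additional hypothesis. But that observation is not a caveat to be noted --- it is fatal to the enterprise. The honest conclusion is that the Proposition, read as a theorem, is refuted rather than proved, and that the paper never intended it as a theorem in the first place. The appropriate response to this statement is not a proof but a recognition that it is a definition-by-negation: invisible dictatorship is the (inevitable) failure of this condition.
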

In \cite{Kirman}, the invisible dictator is the limit point of the hierarchy that decides the outcome of the social welfare function; thus, the preferences of $\overline{\U}$ are the limit preferences of those in the hierarchy. Formally, we write that $\overline{\U}$ is the limit in $\beta \N$ of an ultrafilter, $\U \in \N$. $\overline{\U}$ is characterized as a point in the compactified space of voters; however, given that we are working in the Stone-Cech compactification of $\N$, we can also characterize $\overline{\U}$ as an ultrafilter over $\N$.

Interpreting $\beta \N$ as the set of all ultrafilters over $\N$, we can partition $\beta \N$ into two sets: dictatorships, corresponding to principal ultrafilters over $\N$, and invisible dictatorships, corresponding to nonprincipal ultrafilters over $\N$.

However, we can differentiate the points of $\beta \N$ further, using the Rudin-Frolik ordering:

\begin{definition}
The \textit{Rudin-Frolik ordering}, denoted by $\sqsubseteq$, is a partial ordering\footnote{We define a partial order on a set as a relation that has the properties of reflexivity, antisymmetry, and transitivity.} on the ultrafilters in $\beta \N$ such that for $\U, \W \in \beta \N$, $\U \sqsubseteq \W$ if and only if there is a one-to-one function, $f: \N \to \beta \N$, such that the set $\{f(n): n \in \N\}$ is discrete\footnote{We define a family of points $S$ as discrete if and only if each point $x \in S$ has a neighborhood $U$ such that the intersection of $S$ and $U$ contains only $\{x\}$.} and $\bar{f}(\U)=\W$.
\end{definition}

Before proving our first theorem, we state a lemma regarding the structure of the Rudin-Frolik ordering:
\begin{definition}
We a define a relation using $\sqsubseteq$ such that, for $\U, \W \in \beta\N$, if $\U \sqsubseteq \W$ and $\W \sqsubseteq \U$, then we write $\U \equiv_{RF} \W$
\end{definition}
\begin{Lemma}
$\equiv_{RF}$ is an equivalence relation on the ultrafilters over $\N$.
\end{Lemma}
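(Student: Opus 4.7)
The plan is to reduce the three equivalence relation axioms (reflexivity, symmetry, transitivity) directly to the partial order axioms of $\sqsubseteq$ stated in the footnote of Definition 3.1. Since $\equiv_{RF}$ is defined purely in terms of $\sqsubseteq$, no new fact about the witnessing maps $f \colon \N \to \beta\N$ or about $\beta\N$ should be needed beyond what has already been assumed.

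First I would handle reflexivity: for any ultrafilter $\U \in \beta\N$, the reflexivity of $\sqsubseteq$ gives $\U \sqsubseteq \U$, and this single clause supplies both conjuncts in the definition of $\equiv_{RF}$, so $\U \equiv_{RF} \U$. (If one wanted a concrete witness one could take the inclusion $f(n) = n$, whose image is $\N \subset \beta\N$, which is discrete, and whose continuous extension $\bar f \colon \beta\N \to \beta\N$ is the identity by uniqueness in Definition 2.3(4); but none of this is strictly required, since reflexivity of $\sqsubseteq$ is already asserted.)

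Next, symmetry is immediate from the shape of Definition 3.2: the statement ``$\U \sqsubseteq \W$ and $\W \sqsubseteq \U$'' is manifestly invariant under swapping $\U$ and $\W$, so $\U \equiv_{RF} \W$ implies $\W \equiv_{RF} \U$.

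For transitivity, suppose $\U \equiv_{RF} \V$ and $\V \equiv_{RF} \W$. Unpacking the definition yields the four relations $\U \sqsubseteq \V$, $\V \sqsubseteq \U$, $\V \sqsubseteq \W$, and $\W \sqsubseteq \V$. Applying transitivity of $\sqsubseteq$ to $\U \sqsubseteq \V \sqsubseteq \W$ gives $\U \sqsubseteq \W$, and applying it to $\W \sqsubseteq \V \sqsubseteq \U$ gives $\W \sqsubseteq \U$. Together these yield $\U \equiv_{RF} \W$.

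The main potential obstacle is purely notational rather than mathematical: the claim that $\sqsubseteq$ is itself a partial order (in particular, that it is transitive) is a nontrivial fact about $\beta\N$, but the excerpt has already asserted it, so here it may simply be invoked. If one instead wished to prove transitivity of $\sqsubseteq$ from scratch, one would need to compose the witnesses for $\U \sqsubseteq \V$ and $\V \sqsubseteq \W$ using the universal property of the Stone-\v Cech compactification (Definition 2.3(4)) and check that the resulting range in $\beta\N$ remains discrete, but that work is not required for the lemma as stated.
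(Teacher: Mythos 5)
Your proposal is correct, but it is genuinely different from what the paper does: the paper offers no argument at all for this lemma, simply deferring to the reference \cite{Comfort}, whereas you give a complete, self-contained derivation from the properties of $\sqsubseteq$ asserted in the footnote to Definition 3.1. Your reduction is the standard one --- reflexivity of $\sqsubseteq$ gives reflexivity of $\equiv_{RF}$, the symmetric form of Definition 3.2 gives symmetry, and two applications of transitivity of $\sqsubseteq$ give transitivity --- and it correctly uses only the preorder axioms (reflexivity and transitivity), never antisymmetry. That restraint matters more than you may realize: if one took the footnote literally and assumed $\sqsubseteq$ is antisymmetric on the ultrafilters themselves, then $\U \equiv_{RF} \W$ would force $\U = \W$ and the equivalence relation would collapse to equality, making the lemma vacuous; in the literature the Rudin--Frolik relation is a preorder on $\beta\N$ and becomes a partial order only on the $\equiv_{RF}$-classes. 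So your proof is the right elementary argument, and it also quietly exposes an imprecision in the paper's footnote that the bare citation to \cite{Comfort} papers over. The only caveat is the one you already flag: the real mathematical content (transitivity of $\sqsubseteq$, via composing witnessing discrete maps through the universal property of $\beta\N$) is being assumed rather than proved, but that is equally true of the paper's own treatment.
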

\begin{proof}
See \cite{Comfort}.
\end{proof}

First, we demonstrate that the Rudin-Frolik order coincides with the differentiation between points in $\beta \N$ we have already constructed, the differentiation between dictatorial and invisibly dictatorial social welfare functions:

\begin{Theorem}
Those ultrafilters associated with dictatorial social welfare functions are minimal on the Rudin-Frolik ordering
\end{Theorem}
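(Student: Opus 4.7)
My plan is to unpack what it means for $\U$ to be associated with a dictatorial social welfare function, which by the discussion preceding the theorem means $\U$ is a principal ultrafilter, i.e., $\U$ corresponds to some $n_0 \in \N \subset \beta\N$. Minimality in the Rudin-Frolik partial order (modulo $\equiv_{RF}$) means: whenever $\V \sqsubseteq \U$, one in fact has $\V \equiv_{RF} \U$. So my goal is to show that any $\V$ below a principal ultrafilter is itself principal and is Rudin-Frolik-equivalent to $\U$.

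The first step is the key topological observation that the points of $\N$ are isolated in $\beta\N$; that is, $\{n_0\}$ is an open neighborhood of $n_0$ in $\beta\N$. This follows from property (1) of Definition 2.3 together with the standard identification of $\beta\N$ with the space of ultrafilters on $\N$, under which the principal ultrafilter at $n_0$ is isolated. Armed with this, suppose $\V \sqsubseteq \U$, witnessed by a one-to-one $f : \N \to \beta\N$ with discrete image and $\bar{f}(\V) = \U = n_0$. By Definition 2.2 applied to the open neighborhood $\{n_0\}$ of $n_0 = \lim_{n \to \V} f(n)$, we get $f^{-1}(\{n_0\}) \in \V$. Since $f$ is injective, this preimage has at most one element, so there is a unique $m \in \N$ with $f(m) = n_0$ and $\{m\} \in \V$. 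Hence $\V$ is the principal ultrafilter at $m$.

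For the reverse direction $\U \sqsubseteq \V$, I construct an explicit witness. Choose any injection $g : \N \to \N$ with $g(m) = n_0$; since $g$ takes values in $\N$ and every point of $\N$ is isolated in $\beta\N$, the image $\{g(n) : n \in \N\}$ is automatically discrete. Because $m \in \N$ is itself an isolated point identified with its principal ultrafilter, the continuous extension satisfies $\bar{g}(\V) = \bar{g}(m) = g(m) = n_0 = \U$, which gives $\U \sqsubseteq \V$ and therefore $\U \equiv_{RF} \V$ by Definition 3.2 and Lemma 1.

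I do not expect any substantial obstacle: everything hinges on the single fact that principal ultrafilters are isolated in $\beta\N$, which collapses the limit condition in the definition of $\bar{f}(\V) = \U$ to the demand that $f$ literally take the value $n_0$ on a $\V$-large set. The only care needed is to state this isolation property cleanly and to make sure the two witnessing maps $f$ and $g$ satisfy the discreteness requirement, which is trivial once the image sits inside $\N$.
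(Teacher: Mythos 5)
Your proposal is correct, and it goes further than the paper does. The paper's proof of this theorem consists of exactly your first reduction (dictatorial social welfare functions correspond to principal ultrafilters, citing Kirman) followed by an appeal to the literature for the fact that principal ultrafilters are Rudin--Frolik minimal; no argument for that second fact is given in the paper. You supply one, and it works: the isolation of $n_0$ in $\beta\N$ forces $f^{-1}(\{n_0\}) \in \V$, injectivity of $f$ plus $\emptyset \notin \V$ makes that preimage a singleton $\{m\}$, so $\V$ is principal at $m$; and your explicit injection $g$ with $g(m)=n_0$ witnesses $\U \sqsubseteq \V$, giving $\U \equiv_{RF} \V$. Note also that you correctly read ``minimal'' as minimal modulo $\equiv_{RF}$, which is the only sensible reading since $\sqsubseteq$ as defined is merely a preorder on $\beta\N$ itself. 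What your approach buys is a self-contained, elementary verification in place of a citation; in fact your argument shows slightly more than minimality, namely that the only ultrafilters lying below a principal one are the principal ones, which all form a single $\equiv_{RF}$ class. The one point worth stating explicitly if you write this up is the identity $\bar f(\V) = \lim_{n\to\V} f(n)$ linking the Stone--\v{C}ech extension of Definition 2.3 to the ultrafilter limit of Definition 2.2; the paper uses this identification implicitly, and your proof leans on it at both steps.
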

\begin{proof}
As shown in \cite{Kirman}, the ultrafilters that correspond to the decisive coalitions of dictatorial social welfare functions are principal ultrafilters. Thus, we must show that principal ultrafilters are minimal on Rudin-Frolik ordering over $\beta \N$. For proof of this, we refer the readers to \cite{Rudin}.
\end{proof}

The Rudin-Frolik order is useful not only in separating dictatorial aggregation procedures from invisibly dictatorial procedures--it also allows us to differentiate between invisibly dictatorial rules. 
The position of an ultrafilter on the Rudin-Frolik order indicates its ``level of dictatorship," based on its proximity to principal ultrafilters. Intuitively, we suspect that the closer the structure of a (nonprincipal) ultrafilter is to a principal ultrafilter, the more dictatorial it is likely to be. 

We can formalize this intuition with the following discussion of weak P-points, an equivalence class on $\beta \N$ under $\equiv_{RF}$: 

\begin{definition}
Given the topological space $\beta \N$, a point $\V \in \beta \N$ is called a \textit{weak} \textit{P-point} if $\V$ is not contained in the closure of any countable subset of $\beta \N \backslash \{\V\}$.
\end{definition}

Given that we are working in the compact Hausdorff space $\beta \N$, we can translate this definition to imply that if $\V \in \beta \N$ is a weak P-point, then $\V$ is never the limit of a countable (non-trivial) sequence of other ultrafilters in $\beta \N$. 
 
Because we are working in a countably infinite population, the preferences we are concerned with aggregating can only manifest themselves as countable sequences: the invisible dictatorships over $\N$ will only be limit of countably many voters in a hierarchy.

To see formally the structural similarity between principal ultrafilters and weak P-points, consider the process of construction of invisible dictatorships provided in \cite{Kirman}. A given $\sigma$ is first associated with the ultrafilter over $\N$, representing its corresponding set of decisive voters, which is subsequently associated with its unique limit in $\beta \N$. 

\begin{Theorem}
For both principal ultrafilters and weak P-points, if $\overline{\U}$ is a principal ultrafilter or weak P-point, then $\U=\overline{\U}$.
\end{Theorem}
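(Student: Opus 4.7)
The plan is to split into the two cases stated in the theorem and, in each, extract enough structural information about the one-to-one discrete hierarchy $f: \N \to \beta\N$ with $\bar{f}(\U) = \overline{\U}$ to force the limit operation to be degenerate. Throughout I will use the characterization of $\bar{f}(\U)$ as the unique point $p \in \beta\N$ such that $f^{-1}(U) \in \U$ for every open neighborhood $U$ of $p$.

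For the principal case, suppose $\overline{\U}$ corresponds to a point $n \in \N \subseteq \beta \N$. Since $\N$ sits as an open discrete subspace of $\beta\N$, the singleton $\{n\}$ is open in $\beta\N$. The limit characterization then requires $f^{-1}(\{n\}) \in \U$, and injectivity of $f$ makes this preimage a set of size at most one. Because $\emptyset \not\in \U$, there is a unique $m$ with $f(m) = n$, and $\U$ is forced to be the principal ultrafilter at $m$; the hierarchy therefore produces $\overline{\U}$ trivially.

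For the weak P-point case, the set $\{f(n) : n \in \N\}$ is a countable subset of $\beta\N$ whose closure contains $\overline{\U}$, since $\overline{\U}$ is the limit along $\U$ of the sequence $f$. If $\overline{\U}$ were not already in the image of $f$, it would lie in the closure of a countable subset of $\beta\N \setminus \{\overline{\U}\}$, directly contradicting the weak P-point definition. Hence $\overline{\U} = f(m)$ for some $m \in \N$. Discreteness of the image supplies an open neighborhood $U$ of $f(m)$ meeting $f(\N)$ only at $f(m)$; pulling $U$ back yields $f^{-1}(U) = \{m\} \in \U$, so $\U$ is again principal at the index sitting directly above $\overline{\U}$.

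The main obstacle is the weak P-point case, where the weak P-point property must be applied carefully to rule out $\overline{\U}$ being a genuine non-trivial limit of the countable sequence; the principal case reduces to the observation that points of $\N$ are isolated in $\beta\N$. A secondary matter is notational: the conclusion $\U = \overline{\U}$ should be understood through the identification that sends the principal ultrafilter on $\N$ concentrated at $m$ to the point $f(m) = \overline{\U}$ of $\beta\N$, so that in both cases the hierarchy collapses and the decisive-coalition ultrafilter and its limit coincide under this identification.
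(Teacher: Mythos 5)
Your argument is sound step by step and takes a genuinely different, more careful route than the paper's. In the principal case the paper does no topology at all: it simply invokes the Kirman--Sondermann correspondence (the decisive singleton $\{x\}$ determines both the social welfare function's ultrafilter $\U$ and the point $\overline{\U}$, so they coincide by construction), whereas you derive the collapse from the fact that points of $\N$ are isolated in $\beta\N$ together with injectivity of $f$. In the weak P-point case the paper's proof is a three-line assertion -- $\overline{\U}$ cannot be the limit of a countable sequence, ``unless $\U=\overline{\U}$'' -- which silently skips the possibility you handle explicitly, namely that $\overline{\U}$ lies in the countable image $f(\N)$ itself; your use of discreteness to pull a separating neighborhood back to $\{m\}\in\U$ is exactly the missing step, and as a bonus your argument actually establishes one direction of the paper's Theorem 6 (every Rudin--Frolik predecessor of a weak P-point is principal). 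The one point to flag is your closing ``identification'': what you have literally proved is that $\U$ is principal at $m$ and $\overline{\U}=f(m)$, and in the weak P-point case the principal ultrafilter at $m$ and the nonprincipal point $f(m)$ are distinct ultrafilters on $\N$ under the standard identification of $\beta\N$, so the equation $\U=\overline{\U}$ holds only in the loose sense you describe. That said, this is a defect of the theorem's formulation rather than of your proof -- the paper's own argument commits the same conflation when it writes $\U\in\N$ and then equates $\U$ with a point of $\beta\N\setminus\N$ -- and your reading is the one consistent with the Rudin--Frolik framework the paper is built around.
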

\begin{proof}
Consider first if $\overline{\U}$ is a principal ultrafilter. We then see that $\overline{\U}$ corresponds directly to a decisive set of voters in $\N$, the singleton $\{x\}$ which determines $\overline{\U}$. From \cite{Kirman}, we know $\{x\}$ corresponds directly to a social welfare function, and its corresponding ultrafilter $\U$ over $\N$, and so we have $\overline{\U}=\U$. 

Consider now if $\overline{\U}$ is a weak P-point. We know that $\overline{\U}=\lim_{v \to \U} \U$. Because $\overline{\U}$ is a weak P-point, we know it cannot be the limit of any countable sequence of ultrafilters in $\beta \N$. Because $\U \in \N$, and $\N \subset \beta \N$, then we know that $\overline{\U}$ cannot be the limit of $\U$ in $\beta \N$, unless $\U=\overline{\U}$.
\end{proof}

One could reasonably think that all points contained in $\beta \N \backslash \N$\footnote{ To denote the set of nonprincipal ultrafilters on $\beta\N$, we write $\beta\N \backslash \N$.} are weak P-points; if this is the case, then our revelation concerning ``levels of dictatorship" is not quite as helpful as initially imagined, However, the following theorem demonstrates that there do exist points in $\beta \N$ that are ``less dictatorial" than weak p-points:
\begin{Theorem} \cite{VanMill}
There exists a point $\W \in \beta \N \backslash \N$ such that $\W$ is the limit point of a countable discrete set and the countable sets of which $\W$ is the limit point of comprise a filter. 
\end{Theorem}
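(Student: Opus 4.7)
The plan is to construct $\W$ explicitly by transfinite induction rather than to extract it abstractly, since the two requirements—being a limit of a countable discrete set and having a filter of such limiting sets—are in natural tension. Concretely, I would first fix a countable discrete set $D = \{u_n : n \in \N\} \subseteq \beta\N \setminus \N$, which exists because by Theorem 1 there are $2^{2^{\aleph_0}}$ nonprincipal ultrafilters, and any sequence of distinct nonprincipal ultrafilters supported on pairwise almost-disjoint subsets of $\N$ gives a discrete family in $\beta\N$. By compactness of $\beta\N$, the set $\overline{D} \setminus D$ is nonempty, and the goal is to pick a limit point $\W$ from $\overline{D} \setminus D$ that additionally satisfies the filter condition.

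For the filter condition, the content is that whenever $A, B$ are countable subsets of $\beta\N$ with $\W \in \overline{A} \cap \overline{B}$, we must also have $\W \in \overline{A \cap B}$ (the monotone direction is trivial). I would translate this into a statement about the neighborhood filter of $\W$ in $\beta\N$: identifying $\W$ with an ultrafilter on $\N$, one exploits the correspondence between open neighborhoods of $\W$ in $\beta\N$ and sets in $\W$. The inductive scheme would enumerate, in order type $\mathfrak{c} = 2^{\aleph_0}$, all pairs $(A_\alpha, B_\alpha)$ of countable subsets of $\beta\N$ that could potentially witness a failure of the filter property, and also all countable families of open neighborhoods that need to accumulate to $\W$.

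At stage $\alpha$ I would maintain a filter base $\mathcal{G}_\alpha$ on $\N$ of cardinality less than $\mathfrak{c}$, which encodes partial commitments about what $\W$ will be. The inductive step is to extend $\mathcal{G}_\alpha$ to decide the status of $(A_\alpha, B_\alpha)$: either add a set to $\mathcal{G}_{\alpha+1}$ that forces $\W \notin \overline{A_\alpha}$, or likewise for $B_\alpha$, or force $\W \in \overline{A_\alpha \cap B_\alpha}$. The choice must also preserve the commitment from stage $0$ that $\W \in \overline{D}$, i.e.\ that for every neighborhood of $\W$, cofinally many $u_n$ lie in it. Taking $\W$ to be any ultrafilter extending $\bigcup_\alpha \mathcal{G}_\alpha$ at the end of the induction then yields the desired point.

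The main obstacle is the combinatorial heart of the induction: showing that at each stage the three options above are genuinely exhaustive, i.e.\ that at least one extension is consistent with $\mathcal{G}_\alpha$. This amounts to a dichotomy for pairs of countable sets in $\beta\N$—roughly, that either their closures can be separated from $\W$ along the current filter, or the intersection set's closure genuinely contains the region pinned down by $\mathcal{G}_\alpha$. Establishing this dichotomy is where the delicate topology of $\beta\N$ (in particular the fact that any countable discrete subset of $\beta\N \setminus \N$ is $C^*$-embedded, so its closure is a copy of $\beta\N$) must be used, and this is essentially the content supplied by \cite{VanMill}; I would invoke that reference at the step where consistency of the extension is verified rather than redo the combinatorics.
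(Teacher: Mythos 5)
First, be aware that the paper does not actually prove this theorem: it is stated with an attribution to \cite{VanMill} and no proof follows, so the result is imported wholesale from the literature. Your proposal is therefore not competing with an argument in the paper but with van Mill's original construction, and since you yourself defer ``the combinatorial heart'' to \cite{VanMill}, what you have written is a plan for a proof rather than a proof. The question is whether the plan is sound, and it is not, for two concrete reasons.

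The first is a cardinality obstruction in the bookkeeping. By Theorem 1 (Posp\'{\i}\v{s}il), $|\beta\N| = 2^{2^{\aleph_0}}$, so the number of countable subsets of $\beta\N$ --- and hence of pairs $(A_\alpha, B_\alpha)$ you propose to enumerate --- is $\left(2^{2^{\aleph_0}}\right)^{\aleph_0} = 2^{2^{\aleph_0}} > \mathfrak{c}$. These cannot be listed in order type $\mathfrak{c}$, and a filter base on $\N$ of size less than $\mathfrak{c}$ at each stage cannot decide them all. Any correct construction must first reduce the quantifier ``for every countable set of which $\W$ is a limit point'' to a family of at most $\mathfrak{c}$ test objects (for instance by coding the relevant countable discrete sets via functions $\N \to \beta\N$ whose behaviour is already determined by the filter under construction); that reduction is a substantial part of the content of the theorem and is absent from your sketch. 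The second gap is in the claimed trichotomy: for the limit sets to form a filter they must in particular have pairwise intersections of which $\W$ is still a limit point, so when $A_\alpha \cap B_\alpha = \emptyset$ your third option is unavailable, and you must show that $\W$ can always be separated from one of two disjoint countable sets while remaining in $\overline{D}$. That is precisely the hard case, and asserting that the three options are ``genuinely exhaustive'' assumes what needs to be proved. Deferring both points to \cite{VanMill} matches what the paper itself does, but then the transfinite-induction scaffolding you supply does no work and, as written, is not itself correct.
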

\noindent Thus, in the construction of invisible dictatorships, it is not necessarily the case that $\W$ is identified with the ultrafilter over $\N$ that it is the limit of, as is the case with principal ultrafilters and weak P-points.
 
As the above theorem demonstrates, weak P-points are structurally more similar to principal ultrafilters than other ultrafilters in $\beta \N$. Furthermore, on the Rudin-Frolik order, they are the \textit{most} structurally similar nonprincipal ultrafilters to principal ultrafilters in $\beta \N$, thanks to the following theorem:
\begin{Theorem}
The ultrafilters minimal on the Rudin-Frolik ordering over $\beta \N \backslash \N$ are characterized as weak P-points.
\end{Theorem}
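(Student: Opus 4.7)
The plan is to prove both directions of the equivalence: a nonprincipal $\W \in \beta\N \setminus \N$ is minimal in the Rudin-Frolik order on $\beta\N \setminus \N$ if and only if $\W$ is a weak P-point. For the forward direction, suppose $\W$ is a weak P-point and $\U \in \beta\N \setminus \N$ satisfies $\U \sqsubseteq \W$, witnessed by a one-to-one $f\colon \N \to \beta\N$ with discrete image such that $\bar{f}(\U) = \W$. First I would partition $\N$ into $A = f^{-1}(\N)$ and $B = f^{-1}(\beta\N \setminus \N)$, and argue by cases on which of $A, B$ belongs to $\U$. If $B \in \U$, then $\W$ lies in the closure of the countable discrete set $f(B) \subseteq \beta\N \setminus \N$; either $\W = f(n_0)$ for some $n_0 \in B$, in which case discreteness and the continuity of $\bar{f}$ force $\U$ to be the principal ultrafilter at $n_0$, contradicting $\U \in \beta\N \setminus \N$, or $\W \notin f(B)$, which places $\W$ in the closure of a countable subset of $(\beta\N \setminus \N)\setminus\{\W\}$ and contradicts the weak P-point hypothesis. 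Hence $A \in \U$, and $\W$ is the push-forward of $\U$ under the injection $f|_A\colon A \to \N$. Constructing a reverse RF-map (extending $(f|_A)^{-1}$ by a countable discrete sequence of nonprincipal ultrafilters on the complement of $f(A)$) then yields $\W \sqsubseteq \U$, whence $\U \equiv_{RF} \W$.

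For the reverse direction I would proceed by contrapositive. If $\W$ is not a weak P-point, there is a countable $S \subseteq (\beta\N \setminus \N) \setminus \{\W\}$ with $\W \in \overline{S}$. A Frolik-type thinning lemma, relying on the fact that countable discrete subsets of $\beta\N \setminus \N$ are $C^*$-embedded and hence have closures homeomorphic to $\beta\N$, allows me to extract a countable discrete $D \subseteq \beta\N\setminus\N$ with $\W \in \overline{D} \setminus D$. Indexing $D$ produces an injection $f\colon \N \to \beta\N\setminus\N$ with discrete image, and the identification $\overline{D} \cong \beta\N$ forces $\W = \bar{f}(\U)$ for some $\U \in \beta\N$; since $\W \notin D = f(\N)$, this $\U$ cannot be principal. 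This gives $\U \sqsubseteq \W$ with $\U \in \beta\N \setminus \N$, exhibiting a predecessor of $\W$ in $\beta\N \setminus \N$.

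The crux, and the step I expect to be the main obstacle, is establishing the strictness $\U \not\equiv_{RF} \W$ in the reverse direction. The intuition is that $\W$ is an ``iterated'' ultrafilter built from $\U$ by a discrete sequence of nonprincipal ultrafilters, and such iterates must sit strictly above their base in the RF-order. A candidate argument is to assume $\W \sqsubseteq \U$ via some one-to-one $g\colon \N \to \beta\N$ with discrete image and $\bar{g}(\W) = \U$, thereby producing a continuous self-map $\bar{g}\circ \bar{f}\colon \beta\N \to \beta\N$ fixing $\U$. Tracing this through the $C^*$-embedding identification of $\overline{D}$ with a copy of $\beta\N$ inside which $\W$ appears as a point of $\beta\N \setminus \N$, one derives a contradiction using the classical Frolik-Booth theorem prohibiting such a ``round trip'' through discrete sequences in $\beta\N \setminus \N$. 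Once this strictness is in hand, the contrapositive is complete and the characterization follows.
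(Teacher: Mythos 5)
Your forward direction (weak P-point $\Rightarrow$ RF-minimal) is essentially sound: the case split on $A=f^{-1}(\N)$ versus $B=f^{-1}(\beta\N\setminus\N)$, the use of discreteness to rule out $\W\in f(B)$, and the reduction to a Rudin--Keisler isomorphism when $A\in\U$ all work. (The last step is cleaner if you invoke the standard fact that an injective push-forward on a set of the ultrafilter yields a permutation of $\N$ carrying $\U$ to $\W$, rather than building an image that mixes points of $\N$ with nonprincipal ultrafilters: such a mixed set need not be discrete, since a nonprincipal ultrafilter lies in the closure of every infinite subset of $\N$ that it contains.) Note also that the paper itself gives no argument for this theorem; its proof is the bare citation to Booth, so any worked proof is already going beyond the source.

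The genuine gap is the ``Frol\'{\i}k-type thinning lemma'' in your reverse direction: from a countable $S\subseteq(\beta\N\setminus\N)\setminus\{\W\}$ with $\W\in\overline{S}$ you cannot, in general, extract a countable \emph{discrete} $D$ with $\W\in\overline{D}\setminus D$. The $C^*$-embedding fact you cite applies only once a set is already discrete; it does nothing to discretize $S$. Worse, this step is not merely unproved but unprovable in $\ZFC$: consistently (e.g., under CH, by van Mill's construction of distinct topological types in $\beta\N\setminus\N$) there are points that are accumulation points of some countable set but of no countable discrete set, and by Booth's theorem such points are RF-minimal without being weak P-points. What your argument actually establishes is Booth's characterization --- $\W$ is RF-minimal iff $\W$ is not in the closure of any countable discrete subset of $(\beta\N\setminus\N)\setminus\{\W\}$ (a ``discretely untouchable'' point) --- which is strictly weaker than being a weak P-point. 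So the reverse direction, and with it the theorem in the exact form stated, cannot be completed along these lines; the rest of your sketch (Frol\'{\i}k's theorem giving strictness of $\U\sqsubset\W$ when $\W$ is a non-trivial limit of a discrete sequence) is the right ingredient, but for the weaker, correct statement.
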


\begin{proof}
See \cite{Booth}.
\end{proof}

Finally, the following two theorems allow us to understand the structure of the Rudin-Frolik ordering:
\begin{Theorem}
If $\U \sqsubset \W$, then $\U \neq \W$. 
\end{Theorem}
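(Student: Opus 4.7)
The plan is to read $\U \sqsubset \W$ as the strict version of the Rudin-Frolik relation, namely ``$\U \sqsubseteq \W$ and $\U \not\equiv_{RF} \W$,'' which by the definition of $\equiv_{RF}$ is equivalent to ``$\U \sqsubseteq \W$ and $\W \not\sqsubseteq \U$.'' Under this reading the statement reduces to the general fact that the strict part of a preorder is irreflexive, and I would prove it by contradiction using reflexivity of $\sqsubseteq$.

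First, I would record that $\sqsubseteq$ is reflexive on $\beta\N$: for every $\V \in \beta\N$, $\V \sqsubseteq \V$. This is already part of the ``partial order'' stipulation baked into the definition of the Rudin-Frolik ordering, but it is worth exhibiting a witnessing function explicitly. The natural inclusion $\iota: \N \hookrightarrow \beta\N$ does the job: it is one-to-one, its image $\N$ sits inside $\beta\N$ as a discrete subspace since each $n \in \N$ is an isolated point of $\beta\N$, and the unique continuous extension $\bar{\iota} : \beta\N \to \beta\N$ furnished by the universal property of the Stone-Cech compactification is the identity, so $\bar{\iota}(\V) = \V$ for every $\V \in \beta\N$.

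Then I would finish by contradiction. Suppose $\U \sqsubset \W$ and yet $\U = \W$. Reflexivity applied to $\U$ gives $\U \sqsubseteq \U$; substituting one copy of $\U$ by $\W$ yields $\W \sqsubseteq \U$. Combined with $\U \sqsubseteq \W$, this forces $\U \equiv_{RF} \W$ by the definition of the equivalence relation, contradicting $\U \sqsubset \W$. Hence $\U \neq \W$. I do not foresee a genuine obstacle: the only conceptual step is unpacking the notation $\sqsubset$ and invoking reflexivity, after which the theorem is immediate from a general property of strict parts of preorders.
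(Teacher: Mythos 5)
Your argument is internally consistent, but it rests on a reading of $\sqsubset$ that drains the theorem of its content, and that reading is almost certainly not the intended one. The paper never defines $\sqsubset$ and supplies no proof of this statement (the citation to Booth is attached to the following theorem), so the only guide is the source literature. If $\U \sqsubset \W$ merely abbreviates ``$\U \sqsubseteq \W$ and $\W \not\sqsubseteq \U$,'' then $\U \neq \W$ follows from reflexivity exactly as you say --- for \emph{any} preorder whatsoever; nothing about ultrafilters, discreteness, or $\beta\N$ is used, and the theorem would not be worth stating. In Booth's paper, from which this cluster of results is drawn, $\U \sqsubset \W$ is defined directly: it means $\W = \bar{f}(\U)$ for some one-to-one $f$ whose image is a countable discrete subset of $\beta\N \setminus \N$, i.e., $\W$ is the limit along $\U$ of a discrete sequence of \emph{nonprincipal} ultrafilters. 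Under that reading the theorem has genuine content --- no ultrafilter is the limit, along itself, of a discrete sequence of nonprincipal ultrafilters --- and your reflexivity witness, the inclusion $\N \hookrightarrow \beta\N$, is precisely the kind of map the definition excludes.

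The real work, which your proposal does not touch, is to show that $\bar{f}(\U) \neq \U$ when $f[\N]$ is a discrete set of nonprincipal ultrafilters. The standard route: since a countable discrete subset of $\beta\N$ can be separated by pairwise disjoint sets $A_n \in f(n)$, the map $\pi$ sending each point of $A_n$ to $n$ satisfies $\pi_*(\bar{f}(\U)) = \U$, so $\bar{f}(\U) \geq_{RK} \U$ in the Rudin--Keisler order; if this were an equivalence, $\pi$ would be one-to-one on some $B \in \bar{f}(\U)$, forcing $B \cap A_n$ to be a singleton for $\U$-many $n$ and contradicting the nonprincipality of the $f(n)$. Hence $\bar{f}(\U) >_{RK} \U$ strictly and in particular $\bar{f}(\U) \neq \U$. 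That argument (or Frol\'{\i}k's lemma on countable discrete sets, which serves the same purpose) is what the deferral to Booth stands in for; a formal manipulation of the strict part of a preorder does not substitute for it. Your reflexivity computation is correct as far as it goes, but you need to pin down what $\sqsubset$ means before concluding that the theorem is immediate.
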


\begin{Theorem}
\begin{enumerate}
\item $\V \sqsubset \U \sqsubset \W$ implies $\V \sqsubset \W$
\item $\{\U: \U \sqsubset \W\}$ is a linearly ordered set. \footnote{We define a linearly ordered set as a set with a relation on the set such that the relation that has the properties of reflexivity, antisymmetry, transitivity, and comparability.}
\end{enumerate}
\end{Theorem}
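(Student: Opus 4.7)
The plan is to handle the two parts separately: part (1) should follow from standard properties of the Stone-Cech extension functor, while part (2) is the genuinely structural statement and will carry the bulk of the work.

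For part (1), suppose $\V \sqsubset \U$ is witnessed by a one-to-one $f: \N \to \beta\N$ with discrete range and $\bar f(\V) = \U$, and $\U \sqsubset \W$ is witnessed by $g: \N \to \beta\N$ with $\bar g(\U) = \W$. My candidate composite is $h = \bar g \circ f: \N \to \beta\N$. Because $\beta\N$ is compact Hausdorff, the extension $\bar g \circ \bar f$ is continuous, agrees with $h$ on the dense set $\N$, and is thus the unique Stone-Cech extension, so $\bar h(\V) = \bar g(\bar f(\V)) = \bar g(\U) = \W$. What remains is to verify (possibly after restricting to a $\V$-large subset of $\N$) that $h$ is one-to-one and has discrete range. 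To get this, I would use the discreteness of $\{g(m): m \in \N\}$ to pick pairwise disjoint clopen neighborhoods $C_m \ni g(m)$, then refine the range of $f$ so that collisions under $\bar g$ occur only on a set omitted by $\V$; passage to the $\V$-large complement yields a legitimate witness. Strictness $\V \sqsubset \W$ (rather than $\sqsubseteq$) is immediate from Theorem 5 applied to $\V \sqsubset \U$, together with the preceding transitivity.

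For part (2), suppose $\V_1, \V_2 \sqsubset \W$ are witnessed by $f_1, f_2: \N \to \beta\N$ respectively, with $\bar f_i(\V_i) = \W$. The goal is to produce either a witness for $\V_1 \sqsubseteq \V_2$ or one for $\V_2 \sqsubseteq \V_1$. The starting observation is that $f_1(\N)$ and $f_2(\N)$ are both countable discrete subsets of $\beta\N$ whose Stone-Cech extensions realize $\W$ as a limit along $\V_1$ and $\V_2$ respectively. The key input (Frolik's dichotomy, as developed in \cite{Booth}) is that whenever two such discrete sequences accumulate to the same ultrafilter, there is a canonical reindexing of one sequence by the other: one associates to each $f_1(n)$ a pairwise disjoint clopen neighborhood, checks that the subset of $\N$ on which $f_2$ lands in that neighborhood forms an ultrafilter-coherent block decomposition, and then selects representatives yielding the desired comparability map $h$. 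Injectivity and discreteness of $h$ come from the injectivity and discreteness of the original witnesses together with the disjointness of the neighborhood system.

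The mechanical step is part (1); its only delicate point is the refinement to a $\V$-large subset on which the composite is injective and discrete, and this is routine once the Stone-Cech bookkeeping is set up. The main obstacle is part (2): linear comparability of $\sqsubset$-predecessors is a genuinely combinatorial statement about how discrete sequences in $\beta\N \setminus \N$ can accumulate to a common point. The hardest case is when the ranges $f_1(\N)$ and $f_2(\N)$ are disjoint yet both have $\W$ as a Stone-Cech limit; here Frolik's argument exploits the fact that a one-to-one discrete map determines its extension up to an equivalence that forces one sequence to factor through the other. I would invoke that structural lemma rather than reprove it, and verify that the factorization map assembles into a Rudin-Frolik witness in the required direction.
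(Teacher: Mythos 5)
Your proposal is correct in outline, but it does considerably more than the paper, which proves this theorem by citation alone (``See \cite{Booth}''). For part (2) you end up in the same place as the paper: the comparability of two discrete sequences accumulating to a common ultrafilter is exactly Frolik's lemma, and you explicitly invoke it from \cite{Booth} rather than reproving it, so there the two approaches coincide. For part (1) you supply an actual argument where the paper supplies none, and the composition-of-witnesses idea is the standard and correct one; I would only note that your ``refinement to a $\V$-large subset'' is unnecessary. Since $g(\N)$ is a countable discrete subset of $\beta\N$, one can choose pairwise disjoint clopen neighborhoods $C_m \ni g(m)$, and because $\beta\N$ is an F-space the disjoint cozero sets $\bigcup_{m\in A}C_m$ and $\bigcup_{m\notin A}C_m$ have disjoint closures for every $A\subseteq\N$; hence $\bar g$ is injective on all of $\beta\N$ and is a homeomorphism onto $\overline{g(\N)}$. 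The composite $h=\bar g\circ f$ is therefore automatically one-to-one with discrete range (discreteness in the subspace $\overline{g(\N)}$ passes up to $\beta\N$), with no collisions to excise. The one genuinely loose step is your strictness claim: Theorem 6 only gives $\V\neq\U$ and $\U\neq\W$, which does not by itself yield $\V\sqsubset\W$. The correct argument is that if $\W\sqsubseteq\V$ held, then combining with $\V\sqsubseteq\U\sqsubseteq\W$ would force $\V\equiv_{RF}\U$, contradicting the assumed strictness of $\V\sqsubset\U$; this is a two-line fix but it is not the justification you gave.
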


\begin{proof}
See \cite{Booth}.
\end{proof}

The linear ordering of the predecessors of an ultrafilter on the Rudin-Frolik order suggests the possibility of constructing a method of quantifying the ``level of dictatorship" of aggregation procedures, based on their proximity on the Rudin-Frolik ordering to dictatorial aggregation procedures. 

Given the one-to-one correspondence between ultrafilters and the set of decisive coalitions of certain aggregation procedures beyond the specific circumstances of social welfare functions, it seems that a potentially fruitful direction of research is to interpret theories concerning ultrafilters in the context of preference aggregation further. Additionally, another avenue for generalization opens: because we present results based on the structural properties of ultrafilters, our results can be applied to domains concerning aggregation outside of social welfare theory in which ultrafilters arise, such as judgment aggregation or fields within computational social choice. Furthermore, though we deal only with situations involving a finite number of alternatives in this paper, given the work of \cite{Grafe}, a suitable generalization to the case of infinite alternatives may be achieved.

\nocite{*}
\bibliography{rf}
\bibliographystyle{plain}
\end{document}